\documentclass[letterpaper, 10pt, conference]{ieeeconf}

\IEEEoverridecommandlockouts

\usepackage[utf8]{inputenc}
\usepackage[english]{babel}

\usepackage{graphicx}
\graphicspath{{Figures/}}
\usepackage{subcaption}

\usepackage{newtxmath, newtxtext}

\usepackage{mathtools, amssymb, amsthm, bm}
\newtheorem{theorem}{Theorem}
\newtheorem{lemma}{Lemma}

\newtheorem{corollary}{Corollary}
\theoremstyle{definition}

\newtheorem{problem}{Problem}
\newtheorem{assumption}{Assumption}
\theoremstyle{remark}

\let\labelindent\relax

\usepackage{array}
\usepackage{enumerate}
\usepackage{enumitem}
\usepackage{cite}
\usepackage{xcolor}
\usepackage{multirow}
\usepackage{booktabs}
\usepackage{balance}

\usepackage{hyperref}
\hypersetup{colorlinks = true, linkcolor = blue, citecolor = blue, urlcolor = blue}

\allowdisplaybreaks

\DeclareMathOperator*{\U}{\mathcal{U}}
\DeclareMathOperator*{\Uc}{\mathcal{U}^c}
\DeclareMathOperator*{\Uuc}{\mathcal{U}^{uc}}
\DeclareMathOperator*{\X}{\mathcal{X}}

\title{\LARGE \bf Finite-time Reachability for\\Constrained, Partially Uncontrolled Nonlinear Systems
}

\author{Ram Padmanabhan and Melkior Ornik
\thanks{The authors are with the University of Illinois Urbana-Champaign, Urbana, IL 61801, USA. Emails: \{ramp3, mornik\}@illinois.edu}%
\thanks{This work was supported in part by the Air Force Office of Scientific Research under Grant FA9550-23-1-0131 and in part by the NASA University Leadership Initiative under Grant 80NSSC22M0070. Corresponding Author: Ram Padmanabhan.}%
}

\begin{document}

\maketitle

\begin{abstract}
This paper presents a technique to drive the state of a constrained nonlinear system to a specified target state in finite time, when the system suffers a partial loss in control authority. Our technique builds on a recent method to control constrained nonlinear systems by building a simple, linear driftless approximation at the initial state. We construct a partition of the finite time horizon into successively smaller intervals, and design controlled inputs based on the approximate dynamics in each partition. Under conditions that bound the length of the time horizon, we prove that these inputs result in bounded error from the target state in the original nonlinear system. As successive partitions of the time horizon become shorter, the error reduces to zero despite the effect of uncontrolled inputs. A simulation example on the model of a fighter jet demonstrates that the designed sequence of controlled inputs achieves the target state despite the system suffering a loss of control authority over one of its inputs.
\end{abstract}

\section{Introduction} \label{sec:Introduction}
The challenges in controlling nonlinear dynamical systems are well-documented. While there exist a number of techniques to control such systems, each of these suffers from certain disadvantages that may limit their applicability. For instance, feedback linearization methods require an invertible coordinate transformation that may not exist \cite{Khalil_NS}. Sliding-mode control may suffer from poor performance especially in the presence of actuation constraints \cite{SMC1, SMC2}, and methods such as nonlinear model predictive control require extensive computations \cite{NMPC}. These challenges are compounded when such systems are subjected to undesirable effects such as exogenous disturbances, modeled and unmodeled uncertainties. Such undesirable effects are often a consequence of operating in uncertain or adversarial environments, and may cause a control system to fail its objectives.

The focus of this paper is on nonlinear control systems that suffer a loss in control authority over a subset of their actuators, thus becoming partially uncontrolled. This effect is practically motivated by the example of the Nauka research laboratory module, which suffered a loss in attitude control while docking to the International Space Station in 2021 \cite{Nauka}. A partial loss in control authority separates inputs into \emph{controlled} and \emph{uncontrolled} components. Controlled inputs must be designed to compensate for the effect of uncontrolled inputs, which may take on any values in their admissible set and may be chosen by an adversary.

A partial loss in control authority can also be modeled under the paradigm of robust control theory \cite{RC1, RC2}, which focuses on systems that are subjected to external perturbations. These external perturbations may model uncontrolled inputs that can be chosen by an adversary to prevent a control system from achieving its tasks. In the setting of nonlinear systems, the design of robust control laws is a well-studied problem. While the literature is far too extensive to adequately review here, a number of classical approaches \cite{Qu98, WS02, LBS92, WXD92} involve Lyapunov-based analysis that guarantee minimization of robustness metrics, such as $\mathcal{H}_2$ or $\mathcal{H}_{\infty}$ norms \cite{RC1}. At the same time, such approaches to nonlinear robust control often assume unconstrained inputs and external perturbations that affect the system in a linear manner. In contrast, the setting of a loss of control authority imposes actuation constraints on controlled and uncontrolled inputs. Further, and more importantly, uncontrolled inputs enter the system dynamics after being acted on by a nonlinear function of the state. A direct application of classical methods in robust control would require considering constraints on uncontrolled inputs that are state-dependent and potentially non-convex. Such problems are well-known to be difficult to address \cite{LCF22, CVOC}.

Recent work in the setting of a partial loss of control authority has introduced quantitative metrics to analyze the effect of such a malfunction. These metrics have aimed to quantify the maximal additional time \cite{BXO21, BO23} or energy \cite{PO25a, PO26a} used by a system to achieve a target state under this malfunction. While there has been some effort towards designing controlled inputs to achieve a task despite any adverse effects from uncontrolled inputs \cite{BO22b}, that work considers only linear dynamical systems which are simpler to analyze. This paper addresses this gap by considering nonlinear systems.

We present a method to control nonlinear systems that suffer a partial loss in control authority, in the presence of actuation constraints and over a finite horizon. Our method is based on a constrained nonlinear controller proposed in \cite{PO25b}, where a linear driftless approximation to the original nonlinear system is constructed. It is proved in \cite{PO25b} that a sequence of optimal control inputs designed for this approximation asymptotically stabilizes the nonlinear system to a target state, under an appropriate partition of the time horizon. We extend the method in \cite{PO25b} to the finite-horizon setting and for nonlinear systems that suffer a partial loss in control authority. We construct a similar linear driftless approximation, and under a partition of the finite horizon, propose a sequence of controlled inputs in each interval of this partition. 
Under conditions that bound the length of the time horizon, we prove that the controlled inputs satisfy actuation constraints, and the length of each interval in the partition directly impacts the error from the target state. As intervals in the partition become shorter, the error is driven to zero. We present a simulation example on the model of a fighter jet losing authority over one of its inputs, illustrating the proposed framework.

The remainder of this paper is organized as follows. Section~\ref{sec:Formulation} introduces the problem we intend to solve and recalls the notion of the linear driftless approximation. Section~\ref{sec:Method} presents the sequence of controlled inputs that we design, and section~\ref{sec:Convergence} proves that this method solves the proposed problem. Section~\ref{sec:Convergence} also derives conditions under which actuation constraints are satisfied by the controlled input. Section~\ref{sec:Example} presents an example of a model of a fighter jet that loses authority over one of its inputs, and shows how the proposed sequence of controlled inputs achieves the target state despite uncontrolled effects.

\section{Problem Formulation} \label{sec:Formulation}
Consider a nonlinear dynamical system that evolves on a state space $\X$, where $\X$ is a compact subset of $\mathbb{R}^d$:
\begin{equation} \label{eq:Nominal}
	\dot{x}(t) = f(x(t)) + g(x(t))u(t), ~x(0) = x_0 \in \X
\end{equation}
where the control input $u$ lies in the admissible set $\U$, defined as
\begin{align}
\U \coloneqq \{&u: \mathbb{R}^+ \to \mathbb{R}^{m+p}: u \text{ is piecewise continuous in $t$, } \nonumber \\
&\|u(t)\|_{\infty} \leq 1 \text{ for all $t$}\}, \label{eq:SetU}
\end{align}
and the functions $f:\mathbb{R}^d \to \mathbb{R}^d$ and $g:\mathbb{R}^d \to \mathbb{R}^{m+p}$ are $D_f$- and $D_g$-Lipschitz continuous in the $\infty$-norm on the state space $\X$. Then, there exist constants $D_f$ and $D_g$ such that for all $x_1, x_2 \in \X$,
\begin{align}
f(x_1) - f(x_2) &= d_f(x_1, x_2), \|d_f(x_1, x_2)\|_{\infty} \leq D_f \|x_1 - x_2\|_{\infty}; \label{eq:f_cond} \\
g(x_1) - g(x_2) &= d_g(x_1, x_2), \|d_g(x_1, x_2)\|_{\infty} \leq D_g \|x_1 - x_2\|_{\infty}. \label{eq:g_cond}
\end{align}
{Note that Lipschitz continuity on $\X$ is equivalent to local Lipschitz continuity if $\X$ is compact, by the finite subcover property of compact sets \cite{Lipschitz}.}

We assume the system \eqref{eq:Nominal} undergoes a malfunction causing it to lose authority over $p$ of its $m+p$ actuators:
\begin{equation} \label{eq:Malfunctioning}
\dot{x}(t)\!=\!f(x(t)) + g^c(x(t))u^c(t) + g^{uc}(x(t))u^{uc}(t), ~x(0) = x_0\!\in\!\X
\end{equation}
where the input $u$ splits into \emph{controlled} and \emph{uncontrolled} components $u^c$ and $u^{uc}$ respectively. Then, $u^c$ and $u^{uc}$ lie in admissible sets $\Uc$ and $\Uuc$ which are defined as
\begin{align}
\Uc \coloneqq \{&u^c: \mathbb{R}^+ \to \mathbb{R}^{m}: u^c \text{ is piecewise continuous in $t$, } \nonumber \\
&\|u^c(t)\|_{\infty} \leq 1 ~\text{for all $t$}\}, \label{eq:SetUc} \\
\Uuc \coloneqq \{&u^{uc}: \mathbb{R}^+ \to \mathbb{R}^{p}: u^{uc} \text{ is piecewise continuous in $t$, } \nonumber \\
&\|u^{uc}(t)\|_{\infty} \leq 1 ~\text{for all $t$}\}. \label{eq:SetUuc}
\end{align}
The uncontrolled input $u^{uc} \in \Uuc$ represents either actuator faults or those suffering an adversarial attack. We are interested in the problem of achieving a target state in fixed time despite such a loss in control authority. This problem is formally stated below.

\begin{problem} \label{prob}
Given an initial state $x_0 \in \mathbb{R}^d$, a target state $x_{tg} \in \mathbb{R}^d$ a fixed final time $t_f$ and a prescribed radius $\varepsilon > 0$, design the controlled input $u^c \in \Uc$ such that $\|x_{tg} - x(t_f)\|_{\infty} \leq \varepsilon$ {for any $u^{uc} \in \Uuc$.}
\end{problem}


Define $g_0 \coloneqq g(x_0)$, $g^{c}_{0} \coloneqq g^c(x_0)$ and $g^{uc}_{0} \coloneqq g^{uc}(x_0)$. Then, we say that the dynamics
\begin{equation} \label{eq:DftApprox}
\dot{x}(t) = g^{c}_{0}u^c(t) + g^{uc}_{0}u^{uc}(t), ~x(0) = x_0
\end{equation}
are the \emph{linear driftless approximation} to \eqref{eq:Malfunctioning}. We make the following assumption on $g^{c}_{0}$, used in designing control laws in Section~\ref{sec:Method}.

\begin{assumption}
The matrix $g^{c}_{0}$ has full row rank $d$.
\end{assumption}

This assumption may be relaxed, but may result in some bounded steady-state error as discussed in \cite{PO25b}. Adapting our proposed method for any $g^{c}_{0}$ is an important avenue for future work. In the following section, we present our method to solve Problem~\ref{prob} for the system \eqref{eq:Malfunctioning}.

\begin{figure}[!t]
	\centering
	\includegraphics[width = 0.5\textwidth]{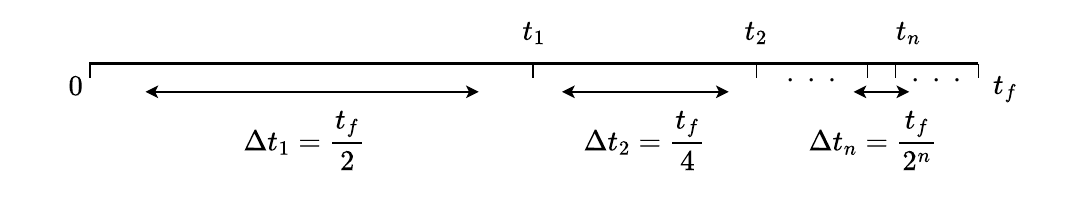}
	\caption{Partitioning the interval $[0,t_f]$ using the sequence $\{t_n\}$.}
	\label{fig:Horizon}
\end{figure}

\section{Method} \label{sec:Method}
The key idea in our method is to partition the time horizon $[0, t_f]$ into successively smaller partitions as follows. Construct the sequence of time instants $\{t_n\}$ where $t_0 = 0$ and $t_n = \left(\frac{2^n-1}{2^n}\right)t_f$, so that $t_1 = \frac{t_f}{2}$, $t_2 = \frac{3t_f}{4}$, and so on, as shown in Fig.~\ref{fig:Horizon}. Let $\Delta t_n = t_n - t_{n-1} = \frac{t_f}{2^n}$.  The sequence $\{t_n\}$ is clearly a geometric sequence with $\lim_{n \to \infty} t_n = t_f$ and $\lim_{n \to \infty} \Delta t_n = 0$.

Let $x_n \coloneqq x(t_n) \in \X$ and $\tilde{x}_n = x_n - x_{tg}$, denoting the error at $t_n$. The control law we propose is written as follows:
\begin{equation} \label{eq:ucn}
u^{c}_{n}(t) = -\frac{1}{\Delta t_n} {g^{c}_{0}}^{\dagger}\left(\tilde{x}_{n-1} + g^{uc}_{0}\alpha_n\right), ~t \in [t_{n-1}, t_n]
\end{equation}
for $n = 1, 2, \ldots$ and where $\alpha_n \in \mathbb{R}^p$ is a vector-valued sequence to be designed. The controlled input $u^c(t)$ is then formed by applying $u^{c}_{n}(t)$ in its corresponding interval $[t_{n-1}, t_n]$. Clearly, $u^c(t)$ is piecewise constant, and is motivated by a similarly structured optimal control law for linear driftless systems discussed in \cite{PO25a}. We remark that in \cite{PO25a}, $u^{c}(t)$ depended on the uncontrolled input $u^{uc}(t)$, whereas we replace this dependence by the sequence $\alpha_n$ in \eqref{eq:ucn}. The undesired effects from $u^{uc}(t)$ are compensated using the state through $\tilde{x}_{n-1} = x_{n-1}-x_{tg}$.

Applying \eqref{eq:ucn} to \eqref{eq:Malfunctioning} over $[t_{n-1}, t_n]$, 
\begin{align}
x_n - x_{n-1} &= \int_{t_{n-1}}^{t_n} f(x(\tau))\mathrm{d}\tau + \int_{t_{n-1}}^{t_n} d_g(x(\tau),x_0)u_n(\tau)\mathrm{d}\tau \nonumber \\
&+ g^{c}_{0} \int_{t_{n-1}}^{t_n} u^{c}_{n}(\tau)\mathrm{d}\tau + g^{uc}_{0} \int_{t_{n-1}}^{t_n} u^{uc}(\tau)\mathrm{d}\tau \nonumber \\
&= -v(t_{n-1}, t_n) - \tilde{x}_{n-1} - g^{uc}_{0}\alpha_n + g^{uc}_{0} \Delta t_n \overline{u}^{uc}, \label{eq:Sol11}
\end{align}
where we define 
\begin{equation} \label{eq:v_def}
	v(t_a, t_b) \coloneqq -\int_{t_a}^{t_b} f(x(\tau)) \mathrm{d}\tau - \int_{t_a}^{t_b} d_g(x(\tau),x_0)u(\tau)\mathrm{d}\tau
\end{equation}
for the input $u(t)$ in the interval $[t_a,t_b]$. We also define $\overline{u}^{uc} \coloneqq \frac{1}{\Delta t_n} \int_{t_{n-1}}^{t_n} u^{uc}(\tau)\mathrm{d}\tau$ as the mean value of $u^{uc}(t)$ in $[t_{n-1},t_n]$. In \eqref{eq:Sol11}, we use the fact that $u^{c}_{n}(t)$ is constant in the interval $[t_{n-1},t_n]$ in \eqref{eq:ucn}, and thus $g^{c}_{0} \int_{t_{n-1}}^{t_n} u^{c}_{n}(\tau)\mathrm{d}\tau = -\tilde{x}_{n-1} - g^{uc}_{0}\alpha_n$. In the expression for $v(t_{n-1},t_n)$, 
\begin{equation} \label{eq:un_gen}
	u_n(\tau) = \begin{bmatrix} u^{c}_{n}(\tau) \\ u^{uc}(\tau) \end{bmatrix},
\end{equation}
consisting of both controlled and uncontrolled components. Substituting $\tilde{x}_{n-1}$ and rearranging,
\begin{equation} \label{eq:Sol1}
x_{tg} - x_n = v(t_{n-1}, t_n) + g^{uc}_{0}\left(\alpha_n - \Delta t_n \overline{u}^{uc}\right).
\end{equation}

In the following section, we show that the right-hand side of \eqref{eq:Sol1} converges to zero as $n$ increases. In this proof, we use the fact that $\Delta t_n$ converges to zero, and design $\alpha_n$ to converge to zero at the same rate as $\Delta t_n$, i.e.,
\begin{equation} \label{eq:alpha_n}
	\alpha_n = \frac{1}{2^n}\mathbf{1}_p,
\end{equation}
where $\mathbf{1}_p$ denotes the $p$-dimensional vector of ones. This choice of $\alpha_n$ is not unique, but we restrict our analysis to this case. Investigating more general choices of $\alpha_n$ is an important avenue for future work.

We remark that in practice, the sequence of inputs \eqref{eq:ucn} would only be applied only until some $n = \overline{n}$, where $\overline{n}$ depends on $\varepsilon$ in Problem~\ref{prob}. The final input $u^{c}_{\overline{n}}$ would then be applied for the entire remaining interval $[t_{\overline{n}-1}, t_f]$ rather than the shorter interval $[t_{\overline{n}-1}, t_{\overline{n}}]$. We also discuss the choice of such $\overline{n}$ in the following section. This choice avoids issues that occur when control inputs \eqref{eq:ucn} change with increasing frequency as $\Delta t_n\!\to\!0$. Fast changes in control inputs can cause harm to physical components such as actuators in practical systems.


\section{Proof of Convergence} \label{sec:Convergence}
To prove convergence of the strategy proposed in Section~\ref{sec:Method}, we first let $D_S = D_f + D_g$, and define the following constants:
\begin{align}
c &\coloneqq \|f(x_0)\|_{\infty} + \|g^{uc}_{0}\|_{\infty} + D_S \|x_{tg} - x_0\|_{\infty}, \label{eq:c} \\
c_1 &\coloneqq 4c\|{g^{c}_{0}}^{\dagger}\|_{\infty}, \label{eq:c1}, \\
\text{and } c_2 &\coloneqq 4D_S \|{g^{c}_{0}}^{\dagger}\|_{\infty} \|g^{uc}_{0}\|_{\infty} \|\alpha_2\|_{\infty} \label{eq:c2}
\end{align}
where $\alpha_2$ is obtained from \eqref{eq:alpha_n}. {The proof of convergence presented in this section relies on a number of conditions being satisfied by these constants, which is not guaranteed. However, the analysis is also quite conservative and as we remark later, violating such conditions does not inhibit satisfactory performance.} We next present the following lemmas which are used in the proof of convergence.

\begin{lemma} \label{lem:err}
Consider the sequence
\begin{equation} \label{eq:errn}
\overline{e}_n \coloneqq \frac{c}{D_S} \left(e^{\Delta t_n D_S}-1\right).
\end{equation}
Then, $\overline{e}_{n+1} \leq \frac{1}{2} \overline{e}_n$ and $\lim_{n\to\infty} \overline{e}_n = 0$. Given some $\varepsilon > 0$, there thus exists some $n_1$ such that $\overline{e}_{n_1} \leq \varepsilon$.
\end{lemma}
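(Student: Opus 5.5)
We need $\overline{e}_{n+1}\le \tfrac12\overline{e}_n$. Since $\Delta t_{n+1}=\Delta t_n/2$, set $a\coloneqq \Delta t_{n+1}D_S \ge 0$, so that $\Delta t_n D_S = 2a$. The inequality then reads
\[
\frac{c}{D_S}\left(e^{a}-1\right)\le \frac{c}{2D_S}\left(e^{2a}-1\right).
\]
The plan is to reduce this to an elementary inequality in $a$. We use the factorization $e^{2a}-1=(e^a-1)(e^a+1)$. Because $c\ge 0$ and $D_S>0$, both sides share the nonnegative factor $\frac{c}{D_S}(e^a-1)$. It therefore suffices to show $1\le \tfrac12(e^a+1)$, which is equivalent to $e^a\ge 1$. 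This holds since $a\ge 0$. The degenerate case $c=0$ is trivial, as then every $\overline{e}_n$ vanishes. If $D_S=0$ we would interpret $\overline{e}_n$ as its limit $c\,\Delta t_n$, and the claim is then immediate from $\Delta t_{n+1}=\Delta t_n/2$. I would note this case in a remark only.

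Next, for the limit, iterate the contraction: $0\le \overline{e}_n\le 2^{-(n-1)}\overline{e}_1$, which tends to $0$. Alternatively, use continuity of the exponential at $0$ together with $\Delta t_n=t_f/2^n\to 0$. Nonnegativity of $\overline{e}_n$ follows from $c\ge 0$, since $c$ is a sum of norms, and from $e^{x}-1\ge 0$ for $x\ge 0$.

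Finally, for the existence of $n_1$, apply the definition of the limit with tolerance $\varepsilon>0$. Explicitly, any $n_1$ with $2^{-(n_1-1)}\overline{e}_1\le\varepsilon$ works, namely $n_1\ge 1+\log_2(\overline{e}_1/\varepsilon)$.

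No step is a real obstacle. The only point requiring care is the factorization trick in the first step, which avoids any convexity or series argument, together with the bookkeeping that $c\ge 0$ so that the direction of the inequality is preserved.
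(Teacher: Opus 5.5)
Your proof is correct. It differs from the paper only in how you prove the key inequality $e^{a}-1\le\frac12(e^{2a}-1)$. The paper expands both sides as Taylor series and compares them term by term, using $(1/2)^k\le 1/2$ for $k\ge 1$. You instead factor $e^{2a}-1=(e^{a}-1)(e^{a}+1)$ and use $e^{a}\ge 1$.

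Your route gives a little more. When $c>0$ it yields the exact ratio $\overline{e}_{n+1}/\overline{e}_n=1/(1+e^{\Delta t_{n+1}D_S})$. This ratio tends to $\frac12$ as $n\to\infty$, so the contraction factor $\frac12$ cannot be improved uniformly.

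The paper's series argument is less tied to the halving partition. For any geometric partition with $\Delta t_{n+1}=\lambda\Delta t_n$ and $\lambda\in(0,1]$, the termwise bound $\lambda^k\le\lambda$ gives $\overline{e}_{n+1}\le\lambda\,\overline{e}_n$ directly. Your factorization extends as written only when $1/\lambda$ is an integer, via $e^{ka}-1=(e^{a}-1)\sum_{j=0}^{k-1}e^{ja}$.

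You also state explicitly two things the paper leaves implicit: the sign condition $c/D_S\ge 0$ needed to scale the inequality, and an explicit choice of $n_1$. The limit and existence steps otherwise match the paper's.
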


\begin{proof}
Recall the properties $\Delta t_{n+1} = \frac{1}{2}\Delta t_n$, $\lim_{n\to\infty} \Delta t_n = 0$. Using a Taylor series expansion, we know
\begin{align*}
e^{\Delta t_{n+1}D_S} - 1 &= \sum_{k = 1}^{\infty} \frac{(\Delta t_{n+1}D_S)^k}{k!} = \sum_{k = 1}^{\infty} \left(\frac{1}{2}\right)^k \frac{(\Delta t_nD_S)^k}{k!} \\
&\leq \frac{1}{2} \sum_{k = 1}^{\infty} \frac{(\Delta t_nD_S)^k}{k!} = \frac{1}{2} \left(e^{\Delta t_nD_S}-1\right),
\end{align*}
since $\left(\frac{1}{2}\right)^k \leq \frac{1}{2}$ for all $k \geq 1$ and the summation consists of only positive terms. From the definition of $\overline{e}_n$ in \eqref{eq:errn}, the result $\overline{e}_{n+1} \leq \frac{1}{2} \overline{e}_n$ follows, and clearly $\lim_{n \to \infty} \overline{e}_n = 0$. Thus, given an $\varepsilon > 0$, there exists some $n_1$ such that $\overline{e}_{n_1} \leq \varepsilon$.
\end{proof}

\begin{figure}[!t]
	\centering \hspace{-0.5cm}
	\includegraphics[width = 0.42\textwidth]{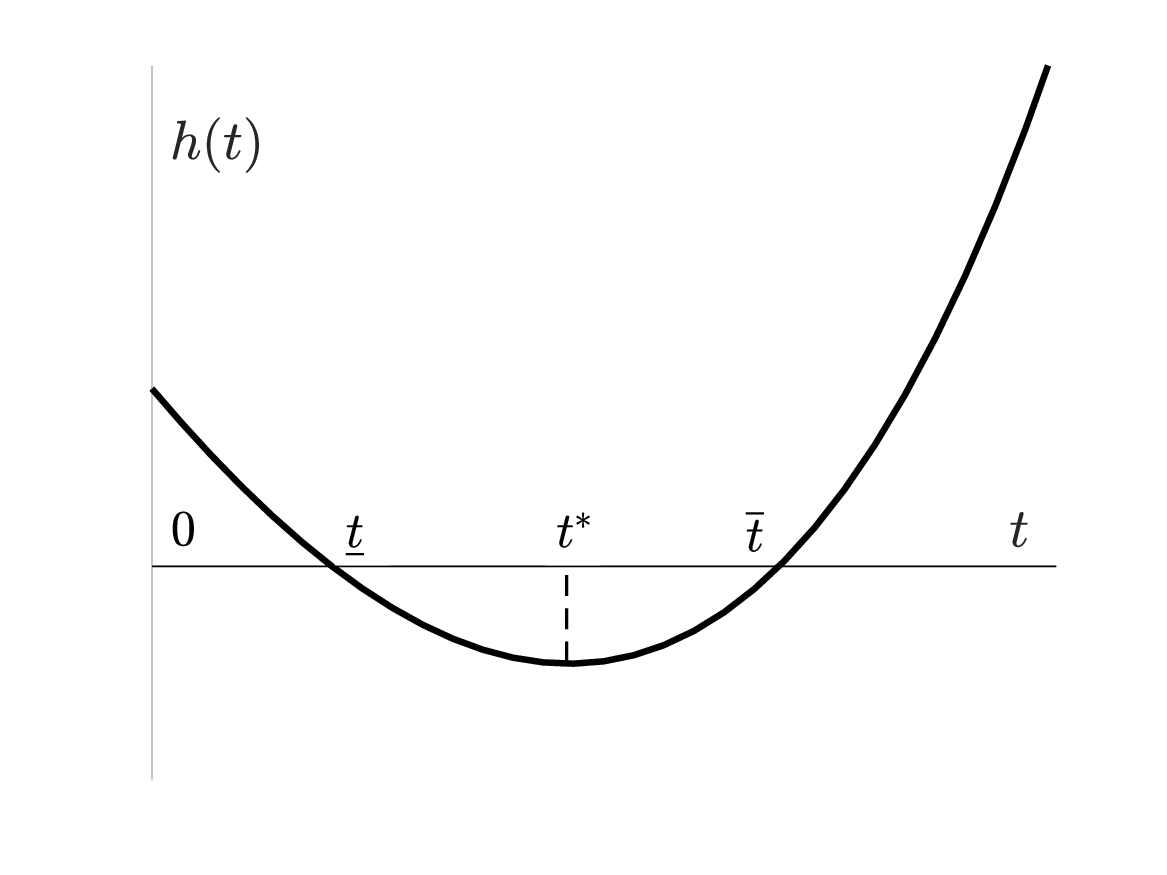}
	\caption{The function $h(t)$ and its behavior based on $\frac{dh(t)}{dt}$.}
	\label{fig:h}
	\vspace{-0.3cm}
\end{figure}

\begin{lemma} \label{lem:h}
Consider the function
\begin{equation} \label{eq:h}
	h(t) = e^{tD_S/2} - 1 - \frac{tD_S - c_2}{c_1}
\end{equation}
for $t \geq 0$. {Then, there exists an interval $[\underline{t}, \overline{t}]$ where $h(t) \leq 0$ if and only if the conditions
\begin{equation} \label{eq:ccond}
	c_1 < 2, ~~~ c_2 < c_1 + 2\left(\log(2/c_1) - 1\right)
\end{equation}
are satisfied, where $\underline{t}$ and $\overline{t}$ satisfy $h(\underline{t}) = h(\overline{t}) = 0$.}
\end{lemma}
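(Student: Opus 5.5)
The plan is to use convexity of $h$ and locate its minimum explicitly. First, $h$ is strictly convex on $t \geq 0$, since
\[
h''(t) = \frac{D_S^2}{4}e^{tD_S/2} > 0
\]
(assuming $D_S > 0$). Next, $h(0) = c_2/c_1 \geq 0$, because $c_1, c_2 \geq 0$ by \eqref{eq:c1}--\eqref{eq:c2}. Finally, $h(t) \to \infty$ as $t \to \infty$, since the exponential dominates the linear term. Hence the sublevel set $\{t \geq 0 : h(t) \leq 0\}$ is a (possibly empty) closed interval. Whenever it is nonempty and nondegenerate, its endpoints are zeros of $h$, by continuity and the intermediate value theorem applied on each side of the minimizer. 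So the whole statement reduces to deciding when $\min_{t\geq 0} h(t) < 0$.

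Differentiating gives
\[
h'(t) = \frac{D_S}{2}e^{tD_S/2} - \frac{D_S}{c_1}.
\]
I would split into two cases on $c_1$.

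\emph{Case $c_1 \geq 2$.} Here $h'(t) \geq \frac{D_S}{2} - \frac{D_S}{c_1} \geq 0$ for all $t \geq 0$. So $h$ is nondecreasing and $h(t) \geq h(0) = c_2/c_1$. This value is positive whenever $c_2>0$, which holds since $\|\alpha_2\|_\infty = 1/4$ and we may assume the gains are nonzero. Thus no interval with $h \leq 0$ exists, which gives necessity of $c_1 < 2$.

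\emph{Case $c_1 < 2$.} The unique critical point is $t^* = \frac{2}{D_S}\log(2/c_1) > 0$, and it is the global minimizer on $[0,\infty)$. Substituting $e^{t^*D_S/2} = 2/c_1$ and $t^*D_S = 2\log(2/c_1)$ yields
\[
h(t^*) = \frac{2}{c_1} - 1 - \frac{2\log(2/c_1) - c_2}{c_1} = \frac{1}{c_1}\bigl(2 - c_1 - 2\log(2/c_1) + c_2\bigr).
\]
Therefore $h(t^*) < 0$ if and only if $c_2 < c_1 + 2(\log(2/c_1) - 1)$, which is exactly the second condition in \eqref{eq:ccond}. Under both conditions, the intermediate value theorem applied on $[0, t^*]$ and on $[t^*, \infty)$ produces zeros $\underline{t} \leq t^* \leq \overline{t}$. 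Convexity ensures these are the only zeros and that $h \leq 0$ precisely on $[\underline{t}, \overline{t}]$. If instead the second condition fails, then $h(t^*) \geq 0$, so $h > 0$ everywhere except possibly at the single point $t^*$, and no nondegenerate interval exists. This gives the converse.

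The main subtlety is the boundary and degenerate bookkeeping rather than the calculus. The strict inequalities in \eqref{eq:ccond} correspond to a nondegenerate interval; equality would give only the single point $t^*$. One also needs $h(0) \geq 0$ so that $\underline{t}$ is a genuine root on $[0,\infty)$; if $c_2 = 0$, then $\underline{t} = 0$, which I would note explicitly. I would state these conventions at the start, and the rest is the direct computation above.
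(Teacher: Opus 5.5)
Your proposal is correct and follows essentially the same route as the paper: use $h(0)=c_2/c_1\ge 0$, locate the unique critical point $t^*=\frac{2}{D_S}\log(2/c_1)$ (which lies in $t>0$ only when $c_1<2$), and show $h(t^*)<0$ exactly under \eqref{eq:ccond}. Your explicit strict-convexity argument and the convention that the interval be nondegenerate make the converse more rigorous than the paper's informal treatment; the convention is needed for the ``only if'' direction, since $h(t^*)=0$ would give the degenerate interval $[t^*,t^*]$.
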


\begin{proof}
First, note that $h(0) = \frac{c_2}{c_1} \geq 0$ from \eqref{eq:c1}, \eqref{eq:c2}. For $h(t)$ to take on negative values, we require its derivative to be negative so that $h(t)$ decreases sufficiently. Differentiating $h(t)$,
\begin{equation} \label{eq:dhdt}
\frac{dh(t)}{dt} = D_S \left(\frac{1}{2} e^{tD_S/2} - \frac{1}{c_1}\right).
\end{equation}
We thus require $c_1 < 2$ so that $\frac{dh(t)}{dt} < 0$ for at least some values of $t$, before increasing to arbitrarily large positive values. We also note that there exists only one point $t^*$ where $\frac{dh(t)}{dt}\big\vert_{t^*} = 0$. If we have $h(t^*) < 0$, then there exists some range $t \in [\underline{t}, \overline{t}]$ where $h(t) \leq 0$. The critical point $t^*$ is obtained by
\begin{equation} \label{eq:tstar}
\frac{dh(t)}{dt}\Big\vert_{t^*} = 0 \implies e^{t^*D_S/2} = \frac{2}{c_1} \text{ or } t^* = \frac{2}{D_S}\log\left(\frac{2}{c_1}\right).
\end{equation}
Then,
$$
h(t^*) = \frac{2}{c_1} - 1 - \frac{2\log(2/c_1) - c_2}{c_1}
$$
and on rearranging, we see that $h(t^*) < 0$ if and only if
$$
	c_1 < 2, ~~~ c_2 < c_1 - 2\left(1 - \log(2/c_1)\right),
$$
thus proving \eqref{eq:ccond}. Under conditions \eqref{eq:ccond}, the function $h(t)$ takes on non-positive values in some interval $[\underline{t}, \overline{t}]$. This behavior of $h(t)$ is illustrated in Fig.~\ref{fig:h}.
\end{proof}

We are now ready to state the central result of this paper.

\begin{theorem}[Boundedness] \label{thm:bound}
Assume \eqref{eq:ccond} is satisfied. Then, the sequence of controlled inputs $u_{n}^{c}$ in \eqref{eq:ucn} satisfies the constraint $u_{n}^{c} \in \Uc$ in \eqref{eq:SetUc} if $t_f$ satisfies the conditions
\begin{subequations} \label{eq:tf_cond}
\begin{align}
t_f &\geq 2\|{g^{c}_{0}}^{\dagger}\|_{\infty} \Big[\|\tilde{x}_0\|_{\infty} + \frac{1}{2}\left\|g^{uc}_{0}\right\|_{\infty}\Big], \\
\text{and } t_f &\in \left[\underline{t}, \overline{t}\right],
\end{align}
\end{subequations}
where $\left[\underline{t}, \overline{t}\right]$ is the interval where $h(t) \leq 0$ in Lemma~\ref{lem:h}. Under these conditions, for all $n \geq 1$, $x_{tg} - x_n$ is bounded by
\begin{equation} \label{eq:en}
\|x_{tg} - x_n\|_{\infty} \leq \overline{e}_n \coloneqq \frac{c}{D_S} \left(e^{\Delta t_n D_S} - 1\right).
\end{equation}
\end{theorem}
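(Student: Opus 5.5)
We argue by strong induction on $n$, carrying two claims together: (i) $u^{c}_{n}\in\Uc$, and (ii) $\|x_{tg}-x_n\|_{\infty}\le \overline{e}_n$. The two must be proved jointly. The input bound at step $n$ requires the error bound at step $n-1$, since $u^c_n$ depends on $\tilde{x}_{n-1}$. The error bound at step $n$ requires $\|u_n(\tau)\|_\infty\le 1$ on $[t_{n-1},t_n]$ to control the term $d_g u_n$ inside $v$.

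\textbf{Input constraint.} From \eqref{eq:ucn} and \eqref{eq:alpha_n},
$$\|u^c_n\|_\infty\le \frac{2^n}{t_f}\|{g^{c}_{0}}^{\dagger}\|_\infty\Big(\|\tilde x_{n-1}\|_\infty+\frac{1}{2^n}\|g^{uc}_0\|_\infty\Big).$$
For $n=1$ this is at most one exactly when the first condition in \eqref{eq:tf_cond} holds. For $n\ge2$ we substitute the inductive bound $\|\tilde x_{n-1}\|_\infty\le\overline{e}_{n-1}$. Writing $\Delta t_{n-1}=t_f/2^{n-1}$ and using $\overline{e}_{n-1}\le 2^{-(n-2)}\overline{e}_1$ from Lemma~\ref{lem:err}, we get
$$\|u^c_n\|_\infty\le \frac{4}{t_f}\|{g^{c}_{0}}^{\dagger}\|_\infty\Big(\overline{e}_1+\tfrac14\|g^{uc}_0\|_\infty\Big).$$
Here $\overline{e}_1=\frac{c}{D_S}(e^{t_fD_S/2}-1)$. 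The requirement that this be at most one, multiplied through by $D_S t_f/c_1$, reads
$$e^{t_fD_S/2}-1-\frac{t_fD_S-c_2}{c_1}\le0,$$
that is, $h(t_f)\le0$. This holds because $t_f\in[\underline t,\overline t]$ by Lemma~\ref{lem:h}. I expect to adjust constants here ($\|\alpha_2\|_\infty=1/4$ enters $c_2$), possibly using the cruder factor $\overline{e}_{n-1}\le\overline{e}_1$ times the ratio $2^n\Delta t$.

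\textbf{Error bound.} Given $\|u_n\|_\infty\le1$, we start from \eqref{eq:Sol1}. The uncontrolled-mean term $g^{uc}_0(\alpha_n-\Delta t_n\overline u^{uc})$ is at most $\|g^{uc}_0\|_\infty\cdot 2\Delta t_n/t_f$-type quantities, which we absorb into $c\,\Delta t_n$. For the nonlinear part we write $x(\tau)-x_{tg}=(x(\tau)-x_{n-1})+\tilde x_{n-1}$. We bound $\|f(x(\tau))\|_\infty+\|d_g(x(\tau),x_0)\|_\infty$ by $\|f(x_0)\|_\infty+D_S\|x(\tau)-x_0\|_\infty$. We then split $x(\tau)-x_0$ through $x_{tg}$, which is the source of the $D_S\|x_{tg}-x_0\|_\infty$ term in $c$. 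This yields a Grönwall-type integral inequality for $r(t)=\|x(t)-x_{tg}\|$-based deviation on the interval,
$$\phi(t)\le c\,(t-t_{n-1})+D_S\!\int_{t_{n-1}}^{t}\phi .$$
Grönwall then gives $\phi(t_n)\le \frac{c}{D_S}(e^{\Delta t_nD_S}-1)=\overline{e}_n$. Because the control exactly cancels $\tilde x_{n-1}$ at the linearized level, the endpoint error is $v$ plus the uncontrolled residual.

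\textbf{Main obstacle.} The main obstacle is making the Grönwall step close without the intermediate-state deviation compounding across intervals. The $c$ must uniformly dominate $\|f(x_0)\|+\|g^{uc}_0\|+D_S\|x_{tg}-x_0\|$. The inductive error term $\tilde x_{n-1}$, which the control cancels at $t_n$, must be absorbed into the exponential rather than appearing additively. I would first try to bound $\|x(\tau)-x_0\|$ via the same integral inequality started at $t_0$, using that states stay in $\X$.
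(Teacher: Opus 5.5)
Your input-constraint step is sound and is essentially the paper's argument. The estimate $\overline{e}_{n-1}\le 2^{-(n-2)}\overline{e}_1$ reduces every $n\ge 2$ to the paper's $n=2$ condition $h(t_f)\le 0$, which is a cleaner form of its $\|u^c_3\|_\infty\le\|u^c_2\|_\infty$ step. The gap is in the error bound, at two points.

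\emph{First}, the residual in \eqref{eq:Sol1} cannot be absorbed into $c\,\Delta t_n$. The term $\|g^{uc}_0(\alpha_n-\Delta t_n\overline{u}^{uc})\|_\infty$ is only bounded by $\|g^{uc}_0\|_\infty(2^{-n}+\Delta t_n)=\|g^{uc}_0\|_\infty\Delta t_n(1+1/t_f)$. This bound is attained, e.g.\ for scalar $g^{uc}_0>0$ and $u^{uc}\equiv-1$. Yet $c$ contains $\|g^{uc}_0\|_\infty$ only once.

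\emph{Second}, the inequality $\phi(t)\le c(t-t_{n-1})+D_S\int_{t_{n-1}}^t\phi$ cannot hold for $\phi=\|x_{tg}-x(\cdot)\|_\infty$. At $t=t_{n-1}$ it forces $\phi(t_{n-1})=0$, whereas $\phi(t_{n-1})=\|\tilde{x}_{n-1}\|_\infty$. The exact identity on the interval is $x_{tg}-x(t)=v(t_{n-1},t)-\frac{t_n-t}{\Delta t_n}\tilde{x}_{n-1}+g^{uc}_0\bigl(\frac{t-t_{n-1}}{\Delta t_n}\alpha_n-\int_{t_{n-1}}^{t}u^{uc}(\tau)\,\mathrm{d}\tau\bigr)$. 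Carrying this forcing honestly through Gr\"onwall gives
\[
\|x_{tg}-x_n\|_\infty\le\frac{c+\|g^{uc}_0\|_\infty/t_f}{D_S}\bigl(e^{\Delta t_nD_S}-1\bigr)+k(\Delta t_nD_S)\,\|\tilde{x}_{n-1}\|_\infty,\qquad k(y)=\frac{(y-1)e^y+1}{y}.
\]
By induction this is $O(\Delta t_n)$, but it is not \eqref{eq:en}.

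This cannot be repaired, because \eqref{eq:en} with this $c$ is false. Take $d=m=p=1$, $f(x)=0.1x$, $g^c\equiv1$, $g^{uc}\equiv0.1$, $\mathcal{X}=[-10,10]$, $x_0=x_{tg}=0$ and $t_f=1$.
\begin{itemize}
\item Constants: $D_S=0.1$, $c=0.1$, $c_1=0.4$, $c_2=0.01$, so \eqref{eq:ccond} holds.
\item Horizon: $h(1)=e^{0.05}-1-0.225<0$ and $t_f\ge0.1$, so \eqref{eq:tf_cond} holds.
\item Trajectory: the law gives $u^c_1\equiv-0.1$. With $u^{uc}\equiv-1$, $x(t)=2(1-e^{0.1t})$, so $\|x_{tg}-x_1\|_\infty=2(e^{0.05}-1)\approx0.103$.
\item Comparison: this is twice $\overline{e}_1=e^{0.05}-1\approx0.051$, and exactly equal to the corrected bound above.
\end{itemize}

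The paper's own proof has the same hole you flagged as your main obstacle. The right-hand side of \eqref{eq:Q1} contains the constant $\|\tilde{x}_0\|_\infty+\frac12\|g^{uc}_0\|_\infty$. From it the paper asserts $\|x_{tg}-x(t)\|_\infty\le\frac{c}{D_S}(e^{tD_S}-1)$ on $[0,t_1]$, by appeal to Gr\"onwall and \cite{PO25b}. That claim already fails at $t=0$ whenever $x_0\neq x_{tg}$.

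A correct theorem needs an enlarged constant as above. The condition $h(t_f)\le 0$, and the halving $\overline{e}_{n+1}\le\frac12\overline{e}_n$ that your input step relies on, would then have to be rederived for the new bound sequence.
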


\begin{proof}

The proof is organized into two parts. In the first part, we consider the case of $n = 1$, proving one part of condition \eqref{eq:tf_cond} and the bound \eqref{eq:en}. In this part, we recall key arguments from \cite[Theorem 1]{PO25b}. In the second part, we consider $n > 1$, proving the rest of condition \eqref{eq:tf_cond} and the general bound \eqref{eq:en}.

\noindent \emph{\underline{Part 1: Proof for $n = 1$:}} Consider
\begin{equation} \label{eq:uc1}
u^{c}_{1}(t) = -\frac{1}{\Delta t_1} {g^{c}_{0}}^{\dagger} \left(\tilde{x}_0 + g_{0}^{uc}\alpha_1\right),
\end{equation}
where we know that $\Delta t_1 = t_1 = t_f/2$. Rearranging, we note that if the condition
\begin{equation} \label{eq:tf1}
	t_f \geq 2\|{g^{c}_{0}}^{\dagger}\|_{\infty} \left[\|\tilde{x}_0\|_{\infty} + \frac{1}{2}\left\|g^{uc}_{0}\right\|_{\infty}\right]
\end{equation}
is satisfied, then $u^{c}_{1} \in \Uc$, i.e., $\|u_{1}^{c}(t)\|_{\infty} \leq 1$ for all $t$, using $\|\alpha_1\|_{\infty} = \frac{1}{2}$ from \eqref{eq:alpha_n}. This proves one part of condition \eqref{eq:tf_cond}.

Next, we consider 
\begin{equation} \label{eq:v0t}
v(0, t) = - \int_{0}^{t} f(x(\tau))\mathrm{d}\tau - \int_{0}^{t} d_g(x(\tau),x_0)u_1(\tau)\mathrm{d}\tau
\end{equation}
for some $t \in [0,t_1]$, according to \eqref{eq:v_def}. We first write $f(x(\tau)) = f(x_0) + d_f(x(\tau),x_0)$, and note that condition \eqref{eq:tf1} and the restriction $u^{uc} \in \Uuc$ on the uncontrolled input imply $\|u_1(\tau)\|_{\infty} \leq 1$ for all $\tau$, where $u_1(\tau)$ is defined through \eqref{eq:un_gen}. We then take norms on both sides of \eqref{eq:v0t}, use Jensen's inequality to move the norm inside integrals, use properties \eqref{eq:f_cond}, \eqref{eq:g_cond} and condition \eqref{eq:tf1} and recall $D_S = D_f + D_g$ to obtain
\begin{align}
\|v(0,t)\|_{\infty} &\leq t\|f(x_0)\|_{\infty} + D_S \int_{0}^{t}\|x(\tau) - x_0\|_{\infty} \mathrm{d}\tau \nonumber \\
&\leq t\left(\|f(x_0)\|_{\infty} + D_S\|x_{tg} - x_0\|_{\infty} \right) \nonumber \\
&+ D_S \int_{0}^{t} \|x_{tg} - x(\tau)\|_{\infty} \mathrm{d}\tau, \label{eq:v01}
\end{align}
where we use the triangle inequality of norms in the second line. We now attempt to bound the last term in \eqref{eq:v01}. Writing out the solution to \eqref{eq:Malfunctioning} for $t \in [0, t_1]$, we know
\begin{align*}
	x(t) - x_0 &= \int_{0}^{t} f(x(\tau))\mathrm{d}\tau + \int_{0}^{t} d_g(x(\tau),x_0) u_1(\tau) \mathrm{d}\tau \nonumber \\
	& + g^{c}_{0} \int_{0}^{t} u^{c}_{1}(\tau)\mathrm{d}\tau + g^{uc}_{0} \int_{0}^{t} u^{uc}(\tau)\mathrm{d}\tau \nonumber \\
	&= -v(0, t) - \frac{t}{\Delta t_1} \tilde{x}_0 + g^{uc}_{0} \left[\int_{0}^{t} u^{uc}(\tau) \mathrm{d}\tau - \frac{t}{\Delta t_1}\alpha_1\right],
\end{align*}
where we have substituted $u^{c}_{1}(\tau)$ from \eqref{eq:uc1}, noting that $g^{c}_{0}u_{1}^{c}(t) = -\frac{1}{\Delta t_1}\left(\tilde{x}_0 + g^{uc}_{0}\alpha_1\right)$. Subtracting $x_{tg}$ on both sides and rearranging, 
\begin{equation} \label{eq:err_t1}
	x_{tg} - x(t) = v(0,t) - \frac{t_1-t}{\Delta t_1} \tilde{x}_0 + g^{uc}_{0}\left[\frac{t_1-t}{\Delta t_1}\alpha_1 - \int_{0}^{t} u^{uc}(\tau)\mathrm{d}\tau \right].
\end{equation}
Taking norms on both sides and noting that $\frac{t_1-t}{\Delta t_1} \leq 1$ for $t \in [0,t_1]$, we have
\begin{align}
\|x_{tg} - x(t)\|_{\infty} &\leq \|v(0,t)\|_{\infty} + \|\tilde{x}_0\|_{\infty} \nonumber \\
&+ \left\|g^{uc}_{0} \left[\frac{t_1-t}{\Delta t_1}\alpha_1 - \int_{0}^{t} u^{uc}(\tau)\mathrm{d}\tau \right]\right\|_{\infty} \nonumber \\
&\leq \|v(0,t)\|_{\infty} + \|\tilde{x}_0\|_{\infty} \nonumber \\
&+ \|g^{uc}_{0}\|_{\infty} \left[\|\alpha_1\|_{\infty} + \int_{0}^{t} \|u^{uc}(\tau)\|_{\infty} \mathrm{d}\tau\right] \nonumber \\
&\leq \|v(0,t)\|_{\infty} + \|\tilde{x}_0\|_{\infty} + \|g^{uc}_{0}\|_{\infty} \left(t + \frac{1}{2}\right), \label{eq:err_t2}
\end{align}
where we use Jensen's inequality \cite{NPBook} to move norms inside integrals once more, the constraint \eqref{eq:SetUuc} and the fact that $\|\alpha_n\|_{\infty} \leq \frac{1}{2}$ for all $n$ from \eqref{eq:alpha_n}. We now substitute \eqref{eq:v01} in \eqref{eq:err_t2} and obtain
\begin{align}
\|x_{tg} - x(t)\|_{\infty} &\leq t\underbrace{\left(\|f(x_0)\|_{\infty} + \|g^{uc}_{0}\|_{\infty} + D_S \|x_{tg} - x_0\|_{\infty}\right)}_{= c} \nonumber \\
+ &\|\tilde{x}_0\|_{\infty} + \frac{1}{2}\|g^{uc}_{0}\|_{\infty} + D_S \int_{0}^{t} \|x_{tg} - x(\tau)\|_{\infty} \mathrm{d}\tau, \label{eq:err_t3}
\end{align}
recalling $c$ from \eqref{eq:c}.

Let $Q^1(t) \coloneqq \int_{0}^{t} \|x_{tg} - x(\tau) \|_{\infty} \mathrm{d}\tau$, so that $\dot{Q}^1(t) = \|x_{tg} - x(t)\|_{\infty}$ \cite{Apostol}. Then, \eqref{eq:err_t3} can be rewritten as
\begin{equation} \label{eq:Q1}
\dot{Q}^1(t) \leq ct + \|\tilde{x}_0\|_{\infty} + \frac{1}{2}\|g^{uc}_{0}\|_{\infty} + D_SQ^1(t).
\end{equation}
We now invoke similar arguments to those made in \cite[Theorem 1]{PO25b} which involve Gr{\"o}nwall's inequality \cite{Khalil_NS}. These arguments can be used to prove that
$$
\|x_{tg} - x(t)\|_{\infty} \leq \frac{c}{D_S} \left(e^{tD_S} - 1 \right), ~ t \in [0, t_1],
$$
and in particular,
\begin{equation} \label{eq:e1}
\|x_{tg} - x_1\|_{\infty} \leq \overline{e}_1 \coloneqq \frac{c}{D_S} \left(e^{\Delta t_1 D_S} - 1\right).
\end{equation}
Equation \eqref{eq:e1} proves the bound \eqref{eq:en} for $n = 1$, bounding the error from $x_{tg}$ at time $t_1$.

\noindent \emph{\underline{Part 2: Proof for $n > 1$:}} We sketch the remainder of the proof for $n > 1$ as the steps closely follow the arguments for $n = 1$. Consider the second interval $[t_1, t_2]$, with
\begin{equation} \label{eq:uc2}
	u^{c}_{2}(t) = -\frac{1}{\Delta t_2} {g^{c}_{0}}^{\dagger}\left(\tilde{x}_1 + g_{0}^{uc} \alpha_2\right),
\end{equation}
where we recall that $\|\tilde{x}_1\|_{\infty} = \|x_1 - x_{tg}\|_{\infty} \leq \overline{e}_1$ in \eqref{eq:e1}. Substituting $\Delta t_2 = \frac{t_f}{4}$ and rearranging, the condition $\|u^{c}_{2}(t)\|_{\infty} \leq 1$ is satisfied if
$$
t_f \geq 4\|{g^{c}_{0}}^{\dagger}\|_{\infty} \left[\overline{e}_1 + \|g^{uc}_{0}\|_{\infty} \|\alpha_2\|_{\infty}\right],
$$
or, using \eqref{eq:e1} and rearranging,
\begin{equation} \label{eq:tf2}
e^{t_fD_S/2} \leq \frac{t_fD_S - c_2}{c_1} + 1,
\end{equation}
where we substitute $\Delta t_1 = t_f/2$ and the values of $c_1$ and $c_2$ from \eqref{eq:c1}, \eqref{eq:c2}. Using the definition of the function $h$ in \eqref{eq:h}, condition \eqref{eq:tf2} can be rewritten as $h(t_f) \leq 0$, which is true when
\begin{equation} \label{eq:tf_int}
t_f \in [\underline{t}, \overline{t}]
\end{equation}
under the conditions \eqref{eq:ccond} in Lemma~\ref{lem:h}. Thus, the input constraint $u^{c}_{2} \in \Uc$ is satisfied when $t_f \in [\underline{t}, \overline{t}]$, forming the second part of condition \eqref{eq:tf_cond}.

To prove the bound \eqref{eq:en} for $n = 2$, we follow identical steps to the case when $n = 1$. We consider $v(t_1, t) = -\int_{t_1}^{t} f(x(\tau))\mathrm{d}\tau - \int_{t_1}^{t} d_g(x(\tau), x_0) u_2(\tau)\mathrm{d}\tau$ for $t \in [t_1, t_2]$ from \eqref{eq:v_def}. Similar to \eqref{eq:v01}, we can obtain the following bound:
\begin{align} \label{eq:v12}
\|v(t_1, t)\|_{\infty} &\leq (t-t_1) \left(\|f(x_0)\|_{\infty} + D_S \|x_{tg} - x_0\|_{\infty} \right) \nonumber \\
&+ D_S \int_{t_1}^{t} \|x_{tg} - x(\tau)\|_{\infty} \mathrm{d}\tau,
\end{align}
and use this bound to obtain the following expression similar to \eqref{eq:err_t3}:
\begin{align}
\|x_{tg} - x(t)\|_{\infty} &\leq c(t-t_1) + \|\tilde{x}_0\|_{\infty} + \frac{1}{4}\|g_{0}^{uc}\|_{\infty} \nonumber \\
&+ D_S \int_{t_1}^{t} \|x_{tg} - x(\tau)\|_{\infty} \mathrm{d}\tau
\end{align}
when $t \in [t_1, t_2]$ and we use $\|\alpha_2\|_{\infty} = \frac{1}{4}$. Following similar steps to the case for $n = 1$ which involve Gr{\"o}nwall's inequality, we obtain the following bound for the error at $t_2$:
\begin{equation} \label{eq:e2}
\|x_{tg} - x_2\|_{\infty} \leq \overline{e}_2 \coloneqq \frac{c}{D_S} \left(e^{\Delta t_2 D_S} - 1\right).
\end{equation}

For larger $n$, the proof follows in a similar way, except condition \eqref{eq:tf_int} actually implies $\|u_{n}^{c}(t)\|_{\infty} \leq 1$ for all $n$. To show this, we first consider $u^{c}_{3}$ and note that $\|\alpha_3\|_{\infty} = \frac{1}{2}\|\alpha_2\|_{\infty}$, $\Delta t_3 = \frac{1}{2}\Delta t_2$ and $\overline{e}_2 \leq \frac{1}{2}\overline{e}_1$ from Lemma~\ref{lem:err}. Then,
\begin{align*}
\|u^{c}_{3}(t)\|_{\infty} &\leq \frac{1}{\Delta t_3} \|{g^{c}_{0}}^{\dagger}\|_{\infty}\left[\overline{e}_2 + \|g^{uc}_{0}\|_{\infty} \|\alpha_{3}\|_{\infty}\right] \\
&\leq \frac{2}{\Delta t_2} \|{g^{c}_{0}}^{\dagger}\|_{\infty} \left[\frac{1}{2}\overline{e}_1 + \|g^{uc}_{0}\|_{\infty} \frac{1}{2} \|\alpha_2\|_{\infty}\right] \\
&= \frac{1}{\Delta t_2} \|{g^{c}_{0}}^{\dagger}\|_{\infty} \left[\overline{e}_1 + \|g^{uc}_{0}\|_{\infty} \|\alpha_2\|_{\infty}\right] \\
&= \|u_{2}^{c}(t)\|_{\infty} \leq 1
\end{align*}
under \eqref{eq:tf_int}. We then have a bound similar to \eqref{eq:e2} at $t_3$, and identical arguments can be used to prove the bound
\begin{equation} \label{eq:xtbound}
\|x_{tg} - x(t)\|_{\infty} \leq \frac{c}{D_S} \left(e^{(t - t_{n-1})D_S} - 1 \right), ~t \in [t_{n-1}, t_n],
\end{equation}
and in particular,
$$
\|x_{tg} - x_n\|_{\infty} \leq \overline{e}_n \coloneqq \frac{c}{D_S} \left(e^{\Delta t_n D_S} - 1\right)
$$
for all $n$, as required in \eqref{eq:en}.

Summarizing, conditions \eqref{eq:tf1} and \eqref{eq:tf_int} can be combined to form \eqref{eq:tf_cond}. Under \eqref{eq:tf_cond} and \eqref{eq:ccond}, the input constraint $u_{c}^{n} \in \Uc$ is then satisfied for all $n$. Then, the bound \eqref{eq:en} holds for all $n$, thus concluding the proof.
\end{proof}

Theorem~\ref{thm:bound} thus develops conditions on $t_f$ under which the input constraint \eqref{eq:SetUc} is satisfied by the sequence of inputs $u^{c}_{n}$ in \eqref{eq:ucn}. Using this bound, we prove that Problem~\ref{prob} is solved by the method presented in Section~\ref{sec:Method}, when the sequence of inputs is applied until some $\overline{n}$.

\begin{figure}[!t]
	\centering
	\includegraphics[width = 0.35\textwidth]{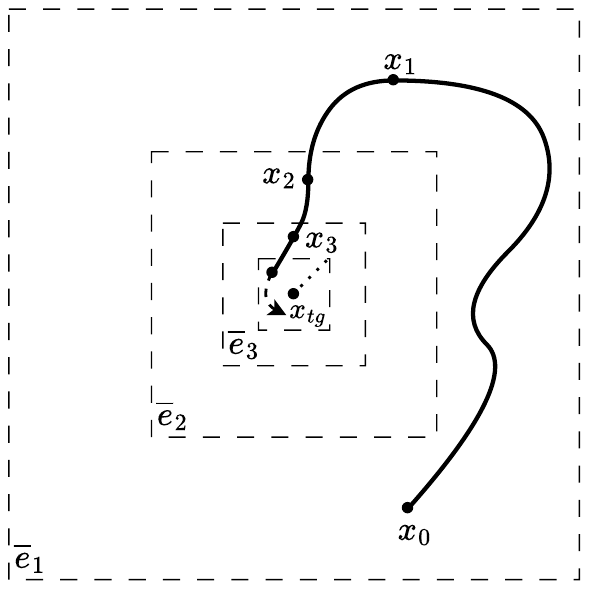}
	\caption{Illustrating the convergence behavior based on Theorem~\ref{thm:bound} and Corollary~\ref{cor:conv}. At every $t_n$, the bound on the error between $x(t)$ and $x_{tg}$ is halved.}
	\label{fig:conv}
\end{figure}

\begin{corollary}[Convergence] \label{cor:conv}

{Assume conditions \eqref{eq:ccond} and \eqref{eq:tf_cond} hold and let $\varepsilon > 0$. Apply the sequence of inputs \eqref{eq:ucn} until some $n = \overline{n}-1$, where $\overline{n}$ satisfies $t_f - t_{\overline{n}-1} \leq \Delta t_{n_1}$ and $n_1$ is the smallest number such that $\overline{e}_{n_1} \leq \epsilon$. In the remaining interval $[t_{\overline{n}-1}, t_f]$, apply the last input $u^{c}_{\overline{n}}$. Then, this sequence of inputs ensures $\|x_{tg} - x(t_f)\|_{\infty} \leq \varepsilon$.}

\end{corollary}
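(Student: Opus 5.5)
The plan is to reuse the per-interval Gr\"onwall bound \eqref{eq:xtbound} from the proof of Theorem~\ref{thm:bound}, but on the final, stretched interval $[t_{\overline{n}-1}, t_f]$ instead of $[t_{\overline{n}-1}, t_{\overline{n}}]$. On every earlier interval $[t_{n-1},t_n]$ with $n \leq \overline{n}-1$, the inputs are exactly those of \eqref{eq:ucn}. Theorem~\ref{thm:bound} therefore applies verbatim there and gives $u^c_n \in \Uc$ and $\|\tilde{x}_{n}\|_{\infty} \leq \overline{e}_{n}$, in particular $\|\tilde{x}_{\overline{n}-1}\|_{\infty} \leq \overline{e}_{\overline{n}-1}$.

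\textbf{Step 1: the last input is still admissible.} The last input $u^c_{\overline{n}}$ is still given by \eqref{eq:ucn} with $\Delta t_{\overline{n}}$ in the denominator, because it is computed from the formula and only applied for longer. Its norm bound is therefore the one already established in Theorem~\ref{thm:bound}, so $u^c_{\overline{n}} \in \Uc$. The concatenated input is piecewise constant, hence also in $\Uc$.

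\textbf{Step 2: redo the integral identity on the stretched interval.} For $t \in [t_{\overline{n}-1}, t_f]$, I redo the computation leading to \eqref{eq:err_t1}. This gives
\[
x_{tg}-x(t) = v(t_{\overline{n}-1},t) - \frac{t_{\overline{n}}-t}{\Delta t_{\overline{n}}}\tilde{x}_{\overline{n}-1} + g^{uc}_0\Big[\frac{t_{\overline{n}}-t}{\Delta t_{\overline{n}}}\alpha_{\overline{n}} - \int_{t_{\overline{n}-1}}^{t} u^{uc}(\tau)\mathrm{d}\tau\Big].
\]
For $t > t_{\overline{n}}$ the coefficient $\frac{t_{\overline{n}}-t}{\Delta t_{\overline{n}}}$ becomes negative, with magnitude at most $\frac{t_f - t_{\overline{n}}}{\Delta t_{\overline{n}}} = 1$, since $t_f - t_{\overline{n}} = \Delta t_{\overline{n}}$. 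On the whole interval its absolute value is therefore at most $1$, and the estimates \eqref{eq:v01}--\eqref{eq:err_t3} go through unchanged with time measured from $t_{\overline{n}-1}$.

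\textbf{Step 3: Gr\"onwall on the stretched interval.} The Gr\"onwall argument of \cite[Theorem 1]{PO25b} then gives
\[
\|x_{tg}-x(t)\|_{\infty} \leq \frac{c}{D_S}\big(e^{(t-t_{\overline{n}-1})D_S}-1\big).
\]
At $t=t_f$ this is bounded by $\frac{c}{D_S}(e^{(t_f-t_{\overline{n}-1})D_S}-1)$. Using $t_f - t_{\overline{n}-1} \leq \Delta t_{n_1}$ and monotonicity of the exponential, this is at most $\overline{e}_{n_1} \leq \varepsilon$ by Lemma~\ref{lem:err}, which is the claim.

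The main obstacle is Step~3. In the Gr\"onwall step of the Theorem, the constant term $\|\tilde{x}_{n-1}\|_{\infty} + \|g^{uc}_0\|_{\infty}\|\alpha_n\|_{\infty}$ is absorbed so that the bound starts at zero. That absorption has to survive when the weight on $\tilde{x}_{\overline{n}-1}$ and $\alpha_{\overline{n}}$ changes sign past $t_{\overline{n}}$. I would first try to mirror exactly whatever absorption \cite{PO25b} uses, noting that the coefficient's magnitude stays within $[0,1]$. If that fails, I would split the interval at $t_{\overline{n}}$: apply the Theorem's bound $\overline{e}_{\overline{n}}$ at $t_{\overline{n}}$, then run a second Gr\"onwall estimate on $[t_{\overline{n}}, t_f]$, an interval of length $\Delta t_{\overline{n}}$ whose initial error is already small. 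Finally I would recombine the two using $\overline{e}_{\overline{n}} \leq \frac12 \overline{e}_{\overline{n}-1}$, which may require choosing $\overline{n}$ one index larger than $n_1$ suggests.
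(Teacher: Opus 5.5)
Your route is the paper's route. Its proof simply asserts that \eqref{eq:xtbound} continues to hold on all of $[t_{\overline{n}-1},t_f]$ and evaluates it at $t_f$, exactly as in your Step~3. That step is a genuine gap, in the paper as well, and the obstacle you flag is fatal rather than technical. The ``absorption'' you hope to copy from \cite{PO25b} cannot exist: the right-hand side of \eqref{eq:xtbound} vanishes at $t=t_{\overline{n}-1}$, whereas the left-hand side there is $\|\tilde{x}_{\overline{n}-1}\|_{\infty}$. An inequality of the form \eqref{eq:err_t3} with constant term $K\geq\|\tilde{x}_{\overline{n}-1}\|_{\infty}$ only gives, via Gr\"onwall, $Ke^{D_S s}+\frac{c}{D_S}(e^{D_S s}-1)$ with $s=t-t_{\overline{n}-1}$. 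The $\tilde{x}_{\overline{n}-1}$ term disappears only through the exact cancellation at $t=t_{\overline{n}}$ in \eqref{eq:Sol1}, and past $t_{\overline{n}}$ the constant input overshoots. Your Step~2 also has a slip: the weight on $\alpha_{\overline{n}}$ is $\frac{t-t_{\overline{n}-1}}{\Delta t_{\overline{n}}}$, which equals $2$ at $t_f$, not $\frac{t_{\overline{n}}-t}{\Delta t_{\overline{n}}}$; the same slip is in \eqref{eq:err_t1}. Correcting it and using $t_f-t_{\overline{n}-1}=2\Delta t_{\overline{n}}$ gives
\[
x_{tg}-x(t_f)=v(t_{\overline{n}-1},t_f)+\tilde{x}_{\overline{n}-1}+g^{uc}_{0}\Bigl(2\alpha_{\overline{n}}-\int_{t_{\overline{n}-1}}^{t_f}u^{uc}(\tau)\,\mathrm{d}\tau\Bigr),
\]
so the entire previous error comes back at $t_f$.

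This defeats the claimed bound itself, not only the proof. Take the following instance:
\begin{itemize}
\item $f\equiv 0$, constant $g$, $p=1$, and $g^{c}_{0}=kI$ with $k$ large;
\item a small admissible Lipschitz constant $D_S>0$ and $t_f=2$, so that \eqref{eq:ccond} and \eqref{eq:tf_cond} hold.
\end{itemize}
Then $v\equiv 0$. Applying \eqref{eq:Sol1} on $[t_{\overline{n}-2},t_{\overline{n}-1}]$ gives $x_{tg}-x(t_f)=g^{uc}_{0}\,\Delta t_{\overline{n}-1}(\bar{u}_{\mathrm{prev}}-\bar{u}_{\mathrm{last}})$, where the two means are those of $u^{uc}$ on $[t_{\overline{n}-2},t_{\overline{n}-1}]$ and $[t_{\overline{n}-1},t_f]$. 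Now let the adversary play $u^{uc}\equiv 1/t_f$ before $t_{\overline{n}-2}$, $\bar{u}_{\mathrm{prev}}=-(1-1/t_f)$ and $\bar{u}_{\mathrm{last}}=1$. Every endpoint bound of Theorem~\ref{thm:bound} stays valid, since $\|\tilde{x}_{\overline{n}-1}\|_{\infty}=\|g^{uc}_{0}\|_{\infty}\Delta t_{\overline{n}-1}\leq\overline{e}_{\overline{n}-1}$. Yet the final error is $(2-1/t_f)\|g^{uc}_{0}\|_{\infty}\Delta t_{\overline{n}-1}$, while $\frac{c}{D_S}(e^{\Delta t_{\overline{n}-1}D_S}-1)\to\|g^{uc}_{0}\|_{\infty}\Delta t_{\overline{n}-1}$ as $D_S\to 0$. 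With $\varepsilon=\overline{e}_{n_1}$ and $\overline{n}=n_1+1$, the conclusion fails.

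Your fallback has the same defect. On $[t_{\overline{n}},t_f]$ the input is still $u^{c}_{\overline{n}}$, which adds the displacement $-(\tilde{x}_{\overline{n}-1}+g^{uc}_{0}\alpha_{\overline{n}})$. So the second Gr\"onwall estimate carries a forcing term of size about $\overline{e}_{\overline{n}-1}$, not merely a small initial error. A correct result needs one of two changes. Either choose a larger $\overline{n}$, so that a bound on $\|v(t_{\overline{n}-1},t_f)\|_{\infty}+\|\tilde{x}_{\overline{n}-1}\|_{\infty}+\|g^{uc}_{0}\|_{\infty}(2\|\alpha_{\overline{n}}\|_{\infty}+\Delta t_{\overline{n}-1})$ is at most $\varepsilon$; this changes the statement. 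Or recompute the last input with $t_f-t_{\overline{n}-1}$ in place of $\Delta t_{\overline{n}}$, so that $\tilde{x}_{\overline{n}-1}$ cancels exactly at $t_f$.
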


\begin{figure*}[!t]

\centering
\begin{subfigure}{0.49\textwidth}
	\centering
	\includegraphics[width = 0.8\textwidth]{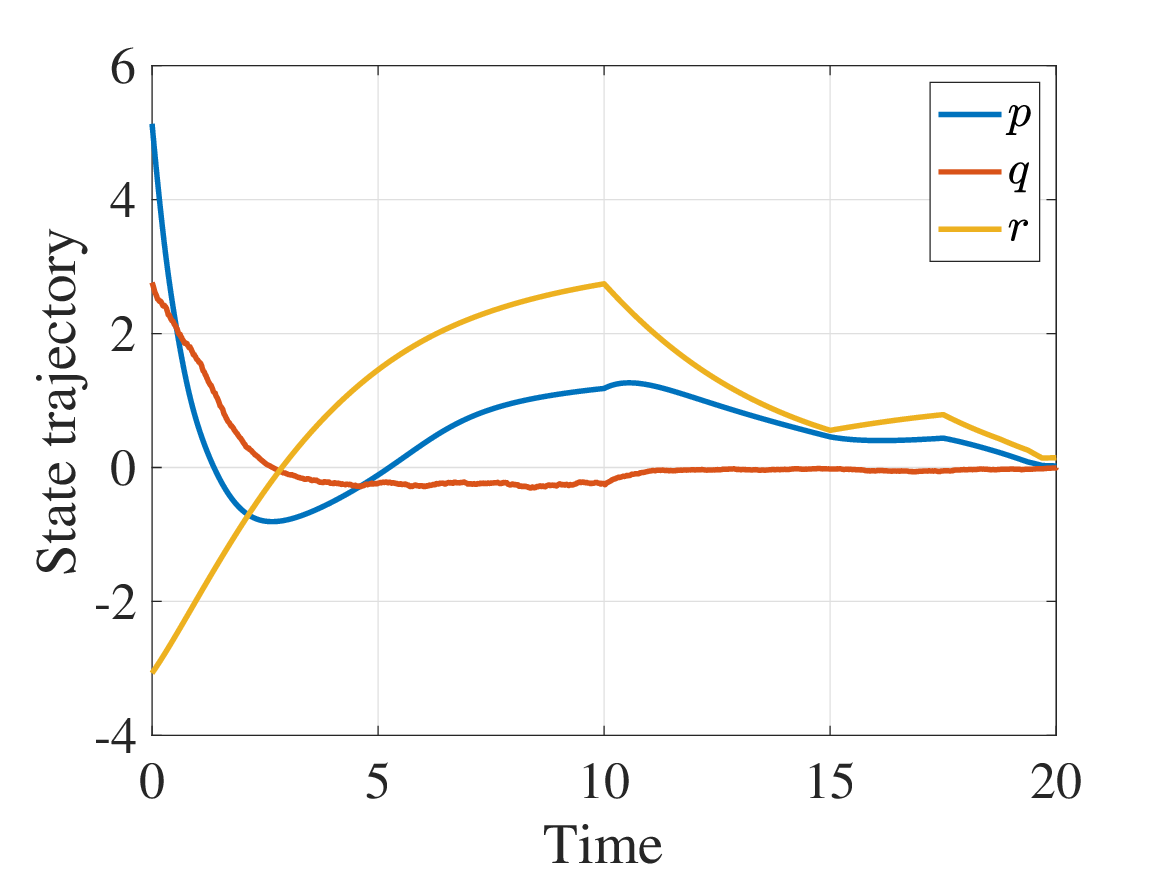}
	\caption{State trajectories}
	\label{fig:state_ADMIRE}
\end{subfigure}
\hfill
\begin{subfigure}{0.49\textwidth}
	\centering
	\includegraphics[width = 0.8\textwidth]{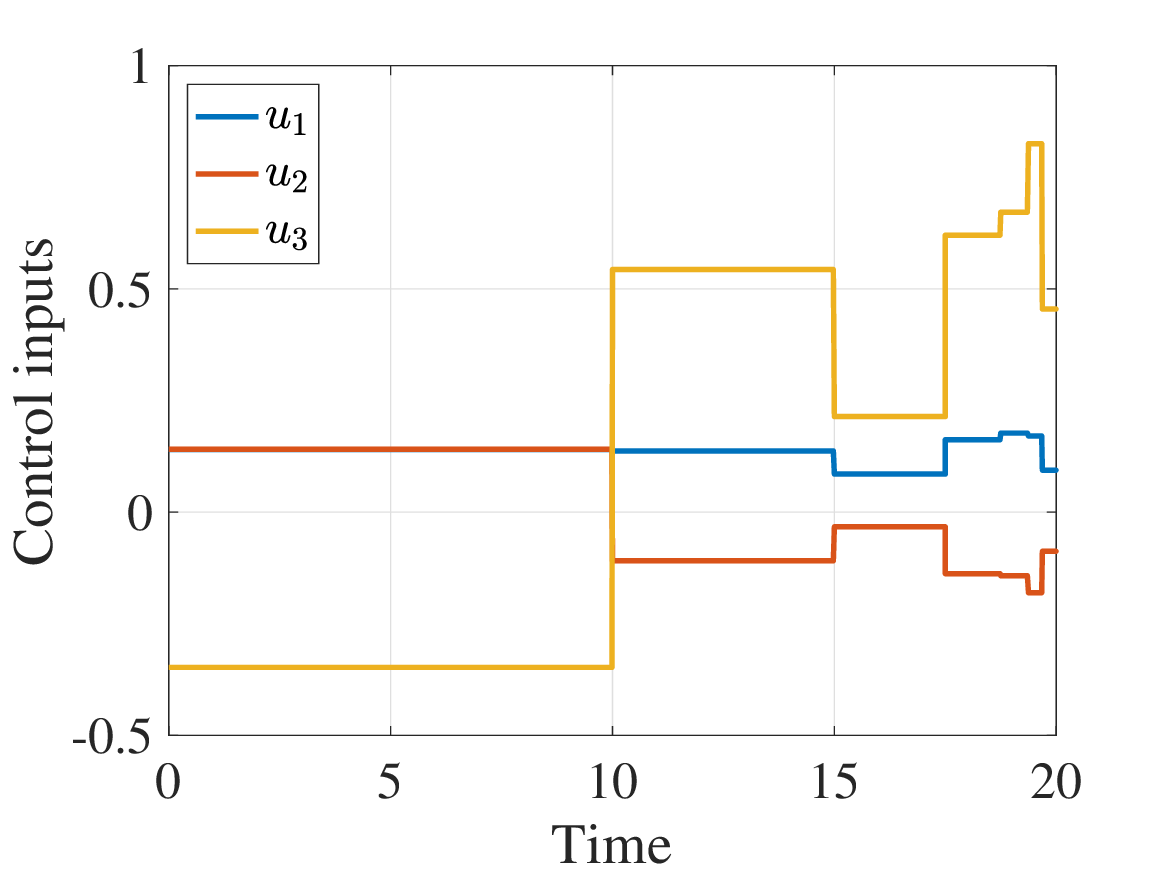}
	\caption{Control inputs}
	\label{fig:input_ADMIRE}
\end{subfigure}
\caption{State trajectories and control inputs for the ADMIRE fighter-jet model, reaching the target state $x_{tg}$ despite uncontrolled effects from the canard wing $u_1$.}
\label{fig:ADMIRE}

\end{figure*}

\begin{proof}

{Since the input $u^{c}_{\overline{n}}$ is applied for the remaining interval $[t_{\overline{n}-1}, t_f]$, bound \eqref{eq:xtbound} holds for the entire interval $[t_{\overline{n}-1}, t_f]$ rather than just the shorter interval $[t_{\overline{n}-1}, t_{\overline{n}}]$.} Then, \eqref{eq:xtbound} can be written as
\begin{align*}
\|x_{tg} - x(t)\|_{\infty} &\leq \frac{c}{D_S}\left(e^{(t - t_{\overline{n}-1})D_S} - 1 \right), ~t \in [t_{\overline{n}-1}, t_f] \\
\text{or } \|x_{tg} - x(t_f)\|_{\infty} &\leq \frac{c}{D_S}\left(e^{(t_f - t_{\overline{n}-1})D_S} - 1 \right) \text{ at } t = t_f.
\end{align*}
Next, we use the fact that $t_f - t_{\overline{n}-1} \leq \Delta t_{n_1}$, where $n_1$ is the smallest number which satisfies $\overline{e}_{n_1} \leq \varepsilon$ and exists from Lemma~\ref{lem:err}. Substituting $t_f - t_{\overline{n}-1} \leq \Delta t_{n_1}$ above, we have
\begin{equation}
\|x_{tg} - x(t_f)\|_{\infty} \leq \frac{c}{D_S} \left(e^{\Delta t_{n_1} D_S} - 1\right) = \overline{e}_{n_1} \leq \varepsilon
\end{equation}
as required, using the definition of $\overline{e}_n$ in \eqref{eq:errn}.
\end{proof}

{We remark that while \eqref{eq:ccond} and \eqref{eq:tf_cond} guarantee the constraint $u^c \in \Uc$ is satisfied, these conditions are sufficient and not necessary. Indeed, in practice, these conditions may be very constraining and may not be satisfied by practical, real-world control systems. However, the analysis in Theorem~\ref{thm:bound} is quite conservative and even choices of $t_f$ that do not satisfy \eqref{eq:tf_cond} may lead to a viable sequence of inputs \eqref{eq:ucn} that satisfy Problem~\ref{prob}.
}

An illustration of the convergence behavior from Theorem~\ref{thm:bound} and Corollary~\ref{cor:conv} is provided in Fig.~\ref{fig:conv}. At every $t_n$, the error between $x(t)$ and $x_{tg}$ is halved, which is a consequence of the design of $\{t_n\}$ in Fig.~\ref{fig:Horizon} and $\alpha_n$ in \eqref{eq:alpha_n}. In the next section, we present an example illustrating the use of this method on the model of a fighter jet.

\section{Illustrative Example} \label{sec:Example}
We consider the dynamics of the ADMIRE fighter-jet model subjected to nonlinear wind effects, as considered in \cite{PO26a}. These dynamics are obtained from a linearized model established in \cite{SDRA}, and models of nonlinear wind effects based on \cite{WWS24}. The dynamics can be written as
\begin{equation} \label{eq:ADMIRE}
\dot{x}(t) = Ax(t) + f_w(x(t)) + B^{c}u^{c}(t) + B^{uc}u^{uc}(t)
\end{equation}
where
\begin{align*}
x &= \begin{bmatrix} p \\ q \\ r \end{bmatrix}; \hspace{0.5em} A = \begin{bmatrix} -0.9967 & 0 & 0.6176 \\ 0 & -0.5057 & 0 \\ -0.0939 & 0 & -0.2127 \end{bmatrix}; \\
f_w(x) &= \frac{1}{2}\left[\sin(p)\cos^2(p), -\sin(2q), 1\right]^{\top}; \\
B^c &= \begin{bmatrix} -4.2423 & 4.2423 & 1.4871 \\ -1.2735 & -1.2735 & 0.0024 \\ -0.2805 & 0.2805 & -0.8823 \end{bmatrix}; \hspace{0.5em} B^{uc} = \begin{bmatrix} 0 \\ 1.6532 \\ 0 \end{bmatrix}.
\end{align*}
The states in this model consist of roll, pitch and yaw rates. The three controlled inputs in $u^c$ correspond to the left and right elevons and rudder, and the uncontrolled input $u^{uc}$ corresponds to the canard, in line with prior work \cite{BO23, PO26a}. The wind effects are chosen as sinusoidal and constant expressions based on \cite{WWS24}, such that their Lipschitz constant is no more than $1$. In particular, these dynamics satisfy conditions \eqref{eq:f_cond} and \eqref{eq:g_cond} everywhere in $\X = \mathbb{R}^3$, with $D_f = \|A\|_{\infty} + 1 = 2.6143$ and $D_g = 0$, since \eqref{eq:ADMIRE} is linear in the input.

The linear driftless approximation \eqref{eq:DftApprox} is then formed by the matrices $g_{0}^{c} = B^c$ and $g_{0}^{uc} = B^{uc}$. We select a final time $t_f = 20$, a randomly chosen initial state $x_0 = [5.13 ~~ 2.76 ~~ -\!3.07]^{\top}$ and the target state $x_{tg} = [0~0~0]^{\top}$. The uncontrolled canard input is allowed to take on any values in the range $[-1, 1]$ according to constraint \eqref{eq:SetUuc}. We terminate the procedure in Section~\ref{sec:Method} at just $n = 8$, i.e., partitioning the time horizon and designing the sequence \eqref{eq:ucn} only until $n = 8$.

The results of our procedure are shown in Fig.~\ref{fig:ADMIRE}. We can see that the state trajectories settle at the target state at exactly the desired time $t_f = 20$, despite uncontrolled effects of the canard input $u_1$. This behavior is achieved while the controlled inputs stay within prescribed constraints \eqref{eq:SetUc}. The piecewise nature of state and input trajectories is also clear, based on the partition of the time horizon and the sequence \eqref{eq:ucn}. 

A particular advantage of this method is its low computational effort, taking under $0.5$ seconds on a modern laptop, and even under $0.2$ seconds for constant uncontrolled inputs as is common in some fault-tolerant control approaches \cite{AH19}. In contrast, some modern techniques for nonlinear control require significantly more computational effort through large-scale optimization problems, as discussed in \cite{PO25b}.

\section{Conclusions}
In this paper, we presented a method to drive a nonlinear system to a target state, under input constraints and when the system suffers a partial loss in control authority. We designed a sequence of controlled inputs that are based on a linear driftless approximation to the original nonlinear dynamics. We then partitioned the finite time horizon into successively shorter intervals, and proved that the sequence of designed controlled inputs results in bounded error from the target state in the original nonlinear system. As the length of each partition converges to zero, we proved that the error from the target state also reduces to zero. Simultaneously, we developed conditions on the length of the horizon guaranteeing input constraints are satisfied. Using the model of a fighter jet losing authority over one of its inputs, we demonstrated how the designed inputs achieve the target state, despite the effect of the uncontrolled input. Future work will be dedicated to investigating more general sequences $\alpha_n$ in \eqref{eq:alpha_n}, and understanding how they impact the convergence result in Theorem~\ref{thm:bound}. Further, we intend to develop a learning-based framework for such a method, where the linear driftless approximation can be learned from data over a small interval, before designing a sequence of controlled inputs.

\balance
\bibliographystyle{IEEEtran}
\bibliography{references}

\end{document}